\newcommand{\dba}{\bar{\partial}}
\theoremstyle{plain}
\newtheorem{theorem}{Theorem}[section]
\begin{document}

\title{Uniform $L^2$-estimates for flat nontrivial line bundles on compact complex manifolds}

\author{Yoshinori Hashimoto$^1$}

\email{yhashimoto@omu.ac.jp}
\address{$^{1,2}$Department of Mathematics, Osaka Metropolitan University, 3-3-138, Sugimoto, Sumiyoshi-ku, Osaka, 558-8585, Japan.}

\author{Takayuki Koike$^2$}
\email{tkoike@omu.ac.jp}

\author{Shin-ichi Matsumura$^3$}
\email{mshinichi-math@tohoku.ac.jp}
\email{mshinichi0@gmail.com}
\address{$^3$Mathematical Institute, Tohoku University, 6-3, Aramaki Aza-Aoba, Aoba-ku, Sendai 980-8578, Japan.}
\date{\today}

\begin{abstract}
In this paper, we extend the uniform $L^2$-estimate of $\dba$-equations for flat nontrivial line bundles, 
proved for compact K\"ahler manifolds in \cite{KH22}, 
to compact complex manifolds. 
In the proof, by tracing the Dolbeault isomorphism in detail, 
we derive the desired $L^2$-estimate directly from Ueda's lemma.
\end{abstract}

\maketitle

\tableofcontents

\section{Introduction}
This paper studies the uniform estimate of the $L^2$-norm $\|u \|$ of the solution of
the $\dba$-equation $\dba u =v$ for flat nontrivial line bundles. 
We begin by recalling the uniform $L^2$-estimate on compact K\"ahler manifolds
proved by the first two authors in \cite{KH22}. 
Let $X$ be a compact complex manifold, 
and let $\mathcal{P} (X)$ be the set of all unitarily flat (holomorphic) line bundles on $X$, 
which can be regarded as an abelian group 
under the tensor product and is compact with respect to a group-invariant distance $\mathsf{d}$ 
as defined in e.g.~\cite{KH22}*{Lemma 2.6} and \cite{Ueda}*{\S 4.5} (see also \cite{KoikeUehara}*{\S A.3}). 
It is well-known that $\mathcal{P} (X)$ is a direct sum of copies of the Picard variety $\mathrm{Pic}^0 (X)$ 
(see e.g.~\cite{KH22}*{Lemma 2.2}).
We write $\mathbb{I}_X$ for the trivial line bundle on $X$, which is the identity in $\mathcal{P}(X)$. 
Then, we have:

\begin{theorem} [{\cite{KH22}*{Theorem 1.1}}] \label{thm:main_1}
Let $(X, g)$ be a compact K\"ahler manifold. 
Then, there exists a constant $K>0$ such that, 
for any element $F\in \mathcal{P}(X)\setminus\{\mathbb{I}_X\}$ and 
any smooth $\bar{\partial}$-closed $(0, 1)$-form $v$ with values in $F$ 
whose Dolbeault cohomology class $[v]\in H^{0, 1}(X, F)$ is trivial,
there exists a unique smooth section $u$ of $F$ such that $\bar{\partial} u = v$ 
and 
\[
\sqrt{\int_X|u|_{h}^2\,dV_g} \leq \frac{K}{\mathsf{d}(\mathbb{I}_X, F)} \sqrt{\int_X|v|_{h, g}^2\,dV_g}
\]
hold for a flat Hermitian metric $h$ on $F$. 
\end{theorem}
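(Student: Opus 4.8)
\section*{Proof proposal}

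The plan is to reduce the statement to a spectral gap estimate for the complex Laplacian, and then to establish a uniform quadratic lower bound on the first eigenvalue in terms of $\mathsf{d}(\mathbb{I}_X, F)$. Throughout, fix the flat Hermitian metric $h$ on $F$, let $\bar\partial_F$ denote the induced $\bar\partial$-operator with formal adjoint $\bar\partial_F^{\ast}$, and let $\Box_F = \bar\partial_F^{\ast}\bar\partial_F$ be the complex Laplacian acting on sections of $F$. First I would settle existence and uniqueness. Since $F$ is nontrivial and flat, a holomorphic section would be parallel by the Bochner technique (via the K\"ahler identity $\Box_F = \tfrac12 \nabla_F^{\ast}\nabla_F$ for the flat Chern connection $\nabla_F$, whose curvature vanishes), hence determined by a holonomy-invariant vector, hence zero; so $H^0(X, F) = 0$ and any solution of $\bar\partial_F u = v$ is unique. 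The hypothesis $[v] = 0$ together with the Hodge decomposition for $(F, h)$ then furnishes a smooth solution $u = \bar\partial_F^{\ast} G_F v$, where $G_F$ is the Green operator.

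For the estimate I would use that $u$ is a $(0,0)$-form with $\ker \bar\partial_F = H^0(X, F) = 0$, so that
\[
\|v\|^2 = \langle \bar\partial_F u, \bar\partial_F u\rangle = \langle \Box_F u, u\rangle \ \ge\ \lambda_1(F)\,\|u\|^2,
\]
where $\lambda_1(F) > 0$ is the smallest eigenvalue of $\Box_F$ on $C^\infty(X, F)$ (strictly positive because the kernel is trivial). Hence $\|u\| \le \lambda_1(F)^{-1/2}\|v\|$, and the theorem follows once I produce a constant $c > 0$, \emph{independent of $F$}, with
\[
\lambda_1(F) \ \ge\ c\,\mathsf{d}(\mathbb{I}_X, F)^2 \qquad \text{for all } F \in \mathcal{P}(X)\setminus\{\mathbb{I}_X\}.
\]

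To obtain this lower bound I would split $\mathcal{P}(X)$ into a region away from $\mathbb{I}_X$ and a neighborhood of it. On the compact set $\{\mathsf{d}(\mathbb{I}_X, F) \ge \delta\}$ the eigenvalue $\lambda_1(F)$ is strictly positive (again since $H^0(X,F) = 0$) and varies continuously with $F$, hence is bounded below by a positive constant; as $\mathsf{d}$ is bounded on the compact group $\mathcal{P}(X)$, the desired inequality holds there. Near $\mathbb{I}_X$ I would parametrize flat line bundles by harmonic real $1$-forms, writing $\nabla_F = d + i\rho$ with $\rho$ harmonic, normalization being immaterial, and with $\mathsf{d}(\mathbb{I}_X, F)$ comparable to $\|\rho\|$ by the construction of $\mathsf{d}$ recalled in the Introduction (all norms on the finite-dimensional space of harmonic $1$-forms being equivalent). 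Using $\lambda_1(F) = \tfrac12\mu_1(F)$ with $\mu_1(F)$ the bottom eigenvalue of $\nabla_F^{\ast}\nabla_F$, I would run second-order Rayleigh--Schr\"odinger perturbation theory off the simple eigenvalue $0$ of $\Delta$ at $\rho = 0$, whose eigenfunction is the constant $\phi_0$.

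The main obstacle is precisely this local analysis: one must show that the Hessian of $\rho \mapsto \lambda_1(F)$ at $\rho = 0$ is positive definite and control the remainder uniformly in the direction of $\rho$. The decisive point is that $\rho$ is \emph{harmonic}, so $d^{\ast}\rho = 0$: writing $\nabla_F = d + A$ with $A = i\rho\,(\cdot)$, the part of $\nabla_F^{\ast}\nabla_F$ linear in $\rho$ is $L_\rho = d^{\ast}A + A^{\ast}d$, and it annihilates the constant because $L_\rho \phi_0 = i\phi_0\, d^{\ast}\rho = 0$. This makes both the first-order term and the second-order off-diagonal correction vanish, leaving only $\langle \phi_0, A^{\ast}A\phi_0\rangle = \|\rho\|^2/\mathrm{Vol}$, a manifestly positive definite quadratic form in $\rho$. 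Since $0$ is a simple eigenvalue of $\Delta$ and $\lambda_1$ therefore varies smoothly with $\rho$ over the finite-dimensional space of harmonic $1$-forms, a uniform Taylor estimate gives $\lambda_1(F) \ge c\,\|\rho\|^2 \ge c'\,\mathsf{d}(\mathbb{I}_X, F)^2$ on a fixed ball about $\mathbb{I}_X$. Combining the two regions yields the uniform constant, with $K \asymp (c')^{-1/2}$, completing the proof.
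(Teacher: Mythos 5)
Your proposal is correct in its essentials, but it takes a genuinely different route from the paper. The paper never touches the Laplacian: it obtains Theorem \ref{thm:main_1} as the K\"ahler special case of Theorem \ref{thm:main_2}, whose proof is \v{C}ech-theoretic and consumes Ueda's lemma (Theorem \ref{cor:main}) as a black box. There one solves $\bar{\partial} u_j = v$ on a finite Stein cover with uniform constants via H\"ormander's estimate, observes that $\{u_i - u_j\}$ is a coboundary $\{f_i - f_j\}$ because $[v]=0$, applies Ueda's lemma to the $0$-cochain $\{f_j\}$ to extract the factor $\mathsf{d}(\mathbb{I}_X,F)^{-1}$, and reassembles $u = \sum_j \rho_j (u_j - f_j)$ with a partition of unity. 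Your argument instead runs through Hodge theory and a spectral gap, $\Vert u \Vert \le \lambda_1(\Box_F)^{-1/2} \Vert v \Vert$, reduced to the uniform bound $\lambda_1(\Box_F) \ge c\, \mathsf{d}(\mathbb{I}_X,F)^2$, proved by compactness away from $\mathbb{I}_X$ and perturbation theory near it; your decisive observation --- that harmonicity of $\rho$ (i.e.\ $d^*\rho=0$) kills both the first-order term and the off-diagonal second-order correction, leaving the positive-definite form $\Vert\rho\Vert^2/\mathrm{Vol}$, with remainder uniform by finite-dimensionality of the space of harmonic $1$-forms --- is sound, and the quadratic rate is sharp (check the flat torus). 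This is much closer in spirit to the original global-analytic proof in \cite{KH22} than to anything in this paper. The trade-off is clear: your argument is self-contained, does not presuppose Ueda's lemma, and (as in \cite{KH22}) can even be turned around to reprove Ueda's lemma in the K\"ahler case; but it hinges on the K\"ahler identity $\Box_F = \tfrac{1}{2}\nabla_F^{*}\nabla_F$, which acquires torsion correction terms on non-K\"ahler manifolds, so unlike the paper's local-plus-\v{C}ech argument it cannot be pushed to Theorem \ref{thm:main_2}. Two points in your sketch need tightening: (i) the comparability $\mathsf{d}(\mathbb{I}_X,F)\asymp\Vert\rho\Vert$ does not follow from group-invariance of $\mathsf{d}$ alone (e.g.\ $\sqrt{\mathsf{d}}$ is also an invariant distance inducing the same topology); you need the explicit holonomy description behind the Ueda distance, $\chi_F(\gamma)=e^{i\int_\gamma \rho}$, together with equivalence of norms on the finite-dimensional space of harmonic forms; (ii) the parametrization $\nabla_F = d + i\rho$ covers only the identity component of $\mathcal{P}(X)$, which suffices near $\mathbb{I}_X$ only because the remaining components stay at distance bounded below --- that remark should be folded into your compactness step.
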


This result can be regarded as an $L^2$-version of Ueda's lemma. 
Ueda's lemma (discussed below) is a foundational result in complex dynamics and complex analysis.

\begin{theorem}[Ueda's lemma, {\cite{Ueda}*{Lemma 4}}]\label{cor:main}
Let $X$ be a compact complex manifold and $\mathcal{U}:=\{U_j\}$ be a finite open covering of $X$ 
such that each $U_j $ is a Stein open set and $U_j$  trivializes any $F\in\mathcal{P}(X)$. 
Then, there exists a constant $K_{\mathcal{U}} >0$ such that, for any $F\in\mathcal{P}(X) \setminus\{\mathbb{I}_X\}$ and any \v{C}ech $0$-cochain $\mathfrak{f}:=\{(U_j, f_{j})\}\in \check{C}^0(\mathcal{U}, \mathcal{O}_X(F))$, the inequality
\[
\max_j\sup_{U_j}|f_j|_h\leq \frac{K_{\mathcal{U}}}{\mathsf{d}(\mathbb{I}_X, F)} \max_{j, k}\sup_{U_{jk}}|f_{jk}|_h
\]
holds for a flat Hermitian metric $h$ on $F$, 
where $\{(U_{jk}, f_{jk})\}:=\delta \mathfrak{f}\in \check{C}^1(\mathcal{U}, \mathcal{O}_X(F))$ is the \v{C}ech coboundary of $\mathfrak{f}$. 
\end{theorem}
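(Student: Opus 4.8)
The plan is to prove the inequality by a normal-families argument combined with the compactness of the group $\mathcal{P}(X)$, reducing the uniform estimate to the qualitative fact that a nontrivial flat bundle has no nonzero holomorphic sections. I first record that fact: if $F \in \mathcal{P}(X)\setminus\{\mathbb{I}_X\}$ and $s \in H^0(X, \mathcal{O}_X(F))$, then $\log|s|_h^2$ is plurisubharmonic (the local weight of the flat metric $h$ is pluriharmonic and $\log|s_{\mathrm{loc}}|^2$ is plurisubharmonic), so by the maximum principle on the compact connected manifold $X$ the norm $|s|_h$ is constant; were it nonzero, $s$ would trivialize $F$, contradicting $F \neq \mathbb{I}_X$. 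Hence $s=0$, and the coboundary map $\delta\colon \check{C}^0(\mathcal{U}, \mathcal{O}_X(F)) \to \check{C}^1(\mathcal{U}, \mathcal{O}_X(F))$ is injective for every nontrivial $F$. The lemma asserts that $\delta$ is bounded below \emph{uniformly} in $F$, with blow-up rate $\mathsf{d}(\mathbb{I}_X, F)^{-1}$ as $F \to \mathbb{I}_X$.

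I would argue by contradiction. If no such $K_{\mathcal{U}}$ existed, there would be $F_n \in \mathcal{P}(X)\setminus\{\mathbb{I}_X\}$ and cochains $\mathfrak{f}^{(n)} = \{(U_j, f_j^{(n)})\}$ with, after scaling, $\max_j \sup_{U_j}|f_j^{(n)}|_{h_n} = 1$ while $\max_{j,k}\sup_{U_{jk}}|f_{jk}^{(n)}|_{h_n} < \mathsf{d}(\mathbb{I}_X, F_n)/n$. Since $\mathcal{P}(X)$ is compact, after passing to a subsequence $F_n \to F_\infty$, with transition cocycles and flat metrics $h_n$ converging accordingly. In flat unit frames the $f_j^{(n)}$ are holomorphic with $|f_j^{(n)}| = |f_j^{(n)}|_{h_n}\le 1$, so Montel's theorem yields a subsequence converging locally uniformly on each $U_j$ to holomorphic limits $f_j^{(\infty)}$; a minor technical point—handled by passing to a slightly shrunk Stein subcover at the cost of the constant, or to a compact exhaustion—is to ensure the sup-norm normalization survives in the limit.

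If $F_\infty \neq \mathbb{I}_X$, then $\mathsf{d}(\mathbb{I}_X, F_n) \to \mathsf{d}(\mathbb{I}_X, F_\infty) > 0$ stays bounded, so the coboundaries tend to $0$ and $\{f_j^{(\infty)}\}$ is a genuine cocycle, i.e.\ a global holomorphic section of $F_\infty$. By the vanishing above this section is $0$, contradicting the normalization $\max_j\sup_{U_j}|f_j^{(\infty)}| = 1$.

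The remaining case $F_\infty = \mathbb{I}_X$ is where I expect the main difficulty, since then $\mathsf{d}(\mathbb{I}_X, F_n)\to 0$ and the denominator degenerates. Here the limit cochain glues to a global holomorphic function on the compact connected $X$, hence to a nonzero constant $c$ (of unit modulus by the normalization). Writing the flat transition functions as $t_{jk}^{(n)} = \exp(\sqrt{-1}\,\theta_{jk}^{(n)})$ with the $\mathsf{d}$-size of the class $\{\theta_{jk}^{(n)}\}$ comparable to $\mathsf{d}(\mathbb{I}_X, F_n)$, the hypothesis $\max\sup|f_{jk}^{(n)}|_{h_n} < \mathsf{d}(\mathbb{I}_X, F_n)/n$ forces the holomorphic coboundary $\log f_k^{(n)} - \log f_j^{(n)}$ (well defined near the nonzero $c$) to approximate $\sqrt{-1}\,\theta_{jk}^{(n)}$ to higher order than $\mathsf{d}(\mathbb{I}_X, F_n)$. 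I would then divide $\{\log f_j^{(n)}\}$ by $\mathsf{d}(\mathbb{I}_X, F_n)$, fix the additive constant, and apply Montel once more to obtain a limit whose coboundary is nonzero; via the \v{C}ech--Dolbeault correspondence this limit represents, on one hand, the Dolbeault class in $H^1(X, \mathcal{O}_X)$ of the flat direction along which $F_n \to \mathbb{I}_X$—nonzero because $\mathcal{P}(X)$ injects into $\mathrm{Pic}^0(X)$—while on the other hand it is a \v{C}ech coboundary, hence the zero class, a contradiction. Guaranteeing the boundedness needed for this last normal-family extraction, via an a priori estimate for the coboundary problem on the Stein cover $\mathcal{U}$, and matching the rescaling rate precisely to the definition of $\mathsf{d}$ in \cite{KH22}*{Lemma 2.6} and \cite{Ueda}*{\S 4.5}, is where the substance of the proof lies.
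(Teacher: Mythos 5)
First, a point of comparison: the paper does \emph{not} prove this statement. It is quoted as Ueda's Lemma 4 and used as the external input from which Theorem \ref{thm:main_2} is derived, so your attempt can only be measured against Ueda's original argument (which the authors describe as ``rather technical'') rather than against anything in this paper. Your outline is in the same spirit as that original argument: contradiction plus normal families on the compact group $\mathcal{P}(X)$. Your preliminary vanishing ($H^0(X,\mathcal{O}_X(F))=0$ for $F\neq\mathbb{I}_X$) is correct, and your first case $F_\infty\neq\mathbb{I}_X$ is essentially complete: the boundary-escape issue you flag is handled by the standard trick of following points where the sup is nearly attained into a fixed member of the (shrunk) cover and using the coboundary bound there, so the limit section is nonzero at a point and the contradiction goes through.

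The genuine gap is in the case $F_n\to\mathbb{I}_X$, exactly at the spot you label ``where the substance of the proof lies,'' and it is not a deferrable technicality. To run Montel on the rescaled cochain $\bigl(\log f_j^{(n)}-\log c\bigr)/\mathsf{d}(\mathbb{I}_X,F_n)$ you need its uniform local boundedness, but all your hypotheses give is that its \v{C}ech \emph{coboundary} is $O(1)$ in sup norm. The implication ``coboundary bounded $\Rightarrow$ cochain bounded after subtracting a constant'' is precisely the trivial-bundle case of Ueda's lemma (a uniform lower bound for $\delta$ on $\check{C}^0(\mathcal{U},\mathcal{O}_X)$ modulo constants), i.e., a statement of the same quantitative nature and difficulty as the one being proven; as written, the hard half of the lemma rests on an unproven estimate equivalent to its other half. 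The reduction is salvageable: that a priori estimate can be obtained either from Hodge theory (finite-dimensionality of $H^{0,1}(X)$ plus elliptic estimates), or by one more round of your own compactness argument --- set $M_n:=\min_{\lambda}\max_j\sup|\log f_j^{(n)}-\lambda|$ and show $M_n=O(\mathsf{d}(\mathbb{I}_X,F_n))$, since otherwise dividing by $M_n$ produces limits that glue to a global constant and contradict the minimality of $\lambda$. Two further points need tightening. (i) To know the limit cocycle $\{\tau_{jk}\}=\lim\theta_{jk}^{(n)}/\mathsf{d}(\mathbb{I}_X,F_n)$ is not a coboundary of real constants, you must choose cocycles nearly attaining the infimum defining $\mathsf{d}$, so that a constant coboundary correction would contradict optimality. (ii) Your final contradiction needs more than the group-level fact that $\mathcal{P}(X)$ injects into $\mathrm{Pic}(X)$: you need the infinitesimal statement that a real constant cocycle which is not a coboundary of real constants cannot become a coboundary of holomorphic functions. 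This is true on any compact complex manifold --- if $i\tau_{jk}=g_k-g_j$ with $g_j$ holomorphic, the functions $\mathrm{Re}\,g_j$ glue to a global pluriharmonic, hence constant, function, forcing each $g_j$ to be constant --- but it does not follow formally from the injectivity you cite, so it should be argued explicitly.
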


This can be partially generalized to higher degree cohomology classes,
 leading to a cohomology vanishing result \cite{KH22}*{Theorem 1.2}, 
 which seems to be related to the generic vanishing theorem (see e.g.~\cite{Lazarsfeld1}*{\S 4.4}).

As written in \cite{KH22}*{\S 2.3}, Theorem \ref{thm:main_1} implies Theorem \ref{cor:main} 
when $X$ is a compact K\"ahler manifold. 
It is natural to speculate that Theorem \ref{thm:main_1} may hold for a compact complex manifold 
that is not necessarily K\"ahler. We affirmatively answer this question, as follows:

\begin{theorem}\label{thm:main_2}
Theorem \ref{thm:main_1} holds for a compact complex manifold $(X, g)$ with a Hermitian metric $g$.
\end{theorem}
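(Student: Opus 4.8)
The plan is to bypass the Hodge-theoretic machinery available in the K\"ahler case and instead deduce the $L^2$-estimate directly from Ueda's lemma (Theorem \ref{cor:main}), which already holds on an arbitrary compact complex manifold, by making the Dolbeault isomorphism $H^{0,1}(X,F)\cong \check{H}^1(\mathcal{U},\mathcal{O}_X(F))$ completely explicit and $L^2$-quantitative. I would fix once and for all two finite Stein coverings $\mathcal{U}'=\{U_j'\}$ and $\mathcal{U}=\{U_j\}$ with $U_j'\Subset U_j$, each $U_j$ relatively compact and pseudoconvex, with bounded overlap multiplicity, and each trivializing every $F\in\mathcal{P}(X)$; all constants produced below will depend only on these data and on $g$. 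Two preliminary reductions are needed. First, since $F$ is nontrivial and unitarily flat, $\log|s|_h^2$ is plurisubharmonic for any holomorphic section $s$ (the Chern curvature vanishes), hence constant on the compact $X$, which forces $H^0(X,\mathcal{O}_X(F))=0$; this yields both the uniqueness of $u$ and the uniqueness of the \v{C}ech cochains below. Second, because $h$ is flat, on each $U_j$ there is a holomorphic frame in which $h$ is isometrically trivial, so every $F$-valued $\dba$-problem over $U_j$ reduces to a scalar one with pointwise-identical norms; this is what makes the local estimates genuinely independent of $F$.

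To trace the isomorphism, on each $U_j$ I solve $\dba w_j=v|_{U_j}$ by taking the minimal $L^2$-solution. By H\"ormander's estimate on the fixed pseudoconvex $U_j$ (applied to the scalar reduction) one gets $\|w_j\|_{L^2(U_j)}\le C_1\|v\|_{L^2(U_j)}$ with $C_1$ independent of $F$. The differences $f_{jk}:=w_j-w_k$ are holomorphic sections of $F$ over $U_{jk}$ and form a \v{C}ech $1$-cocycle; the hypothesis $[v]=0$ in $H^{0,1}(X,F)$ means precisely that $\{f_{jk}\}$ is a coboundary, so there is a (unique, by $H^0=0$) $0$-cochain $\{f_j\}\in\check{C}^0(\mathcal{U},\mathcal{O}_X(F))$ with $f_j-f_k=f_{jk}$. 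The sections $w_j-f_j$ then agree on overlaps and glue to a global smooth section $u$ with $\dba u=v$; smoothness follows from elliptic regularity for $w_j$ and holomorphicity of $f_j$, and uniqueness from $H^0(X,\mathcal{O}_X(F))=0$.

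Next I bound the pieces uniformly in $F$. Since $f_{jk}$ is holomorphic, the sub-mean-value inequality on $U_{jk}'\Subset U_{jk}$ gives $\sup_{U_{jk}'}|f_{jk}|_h\le C_2\|f_{jk}\|_{L^2(U_{jk})}\le C_3\|v\|_{L^2(X)}$, again with $F$-independent constants. Ueda's lemma applied to $\{f_j\}$ on the covering $\mathcal{U}'$ yields $\max_j\sup_{U_j'}|f_j|_h\le \frac{K_{\mathcal{U}'}}{\mathsf{d}(\mathbb{I}_X,F)}\max_{j,k}\sup_{U_{jk}'}|f_{jk}|_h$. Covering $X$ by $\{U_j'\}$ and using $u=w_j-f_j$ on $U_j'$, I estimate
\[
\int_X|u|_h^2\,dV_g\le 2\sum_j\|w_j\|_{L^2(U_j)}^2+2\sum_j\mathrm{Vol}(U_j')\Bigl(\max_j\sup_{U_j'}|f_j|_h\Bigr)^2\le C_4\|v\|_{L^2(X)}^2+\frac{C_5}{\mathsf{d}(\mathbb{I}_X,F)^2}\|v\|_{L^2(X)}^2 ,
\]
with $C_4,C_5$ independent of $F$. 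Finally, since $\mathcal{P}(X)$ is compact, $\mathsf{d}(\mathbb{I}_X,F)$ is bounded above by the diameter $D$, so $1\le D^2/\mathsf{d}(\mathbb{I}_X,F)^2$ and the $F$-independent term $C_4\|v\|_{L^2(X)}^2$ can be absorbed, yielding $\int_X|u|_h^2\,dV_g\le \frac{K^2}{\mathsf{d}(\mathbb{I}_X,F)^2}\|v\|_{L^2(X)}^2$ with $K^2:=C_4D^2+C_5$, which is the claim.

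The main obstacle, and the key point of the argument, is ensuring that the factor $1/\mathsf{d}(\mathbb{I}_X,F)$, which blows up as $F$ approaches $\mathbb{I}_X$, enters \emph{only} through Ueda's lemma applied to $\{f_j\}$, while every other constant in the local $\dba$-solving and in the passage from $L^2$- to sup-norms remains uniform in $F$. This uniformity is exactly what the isometric local triviality of the flat metric $h$ buys, reducing all local analysis to fixed scalar problems on the chosen Stein pieces; the remaining purely $F$-independent contribution (from the local solutions $w_j$) is then harmless precisely because the compactness of $\mathcal{P}(X)$ bounds $\mathsf{d}(\mathbb{I}_X,F)$ from above. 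Care must also be taken that the same cochain $\{f_j\}$ used to glue $u$ is the one to which Ueda's lemma is applied, for which the vanishing $H^0(X,\mathcal{O}_X(F))=0$ guaranteeing its uniqueness is essential.
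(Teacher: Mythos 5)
Your proposal is correct and follows essentially the same route as the paper's proof: local H\"ormander solutions on a Stein cover trivializing every flat bundle, extraction of the \v{C}ech $0$-cochain $\{f_j\}$ from the triviality of $[v]$, Ueda's lemma (Theorem \ref{cor:main}) on the shrunken cover $\mathcal{U}'$ combined with the mean-value inequality (via the local isometric triviality of the flat metric) to control $\sup_{U'_j}|f_j|_h$ by $\Vert v\Vert_{L^2(X)}/\mathsf{d}(\mathbb{I}_X,F)$, and absorption of the $F$-independent term using the boundedness of $\mathsf{d}(\mathbb{I}_X,\cdot)$ on the compact group $\mathcal{P}(X)$. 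The only deviations are cosmetic: you glue $u=w_j-f_j$ directly and integrate over the shrunken cover $\{U'_j\}$, where Ueda's sup-bound applies verbatim, instead of using the paper's partition of unity subordinate to $\{U_j\}$, which if anything streamlines the final bookkeeping.
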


An important distinction from Theorem \ref{thm:main_1} is that the proof of the above theorem crucially depends on Ueda's lemma (Theorem \ref{cor:main}); 
Indeed, Theorem \ref{thm:main_2} is directly derived from Ueda's lemma. 
One of the motivations behind Theorem \ref{thm:main_1} was to find a new proof of Ueda's lemma in such a way that it is more geometric compared to the original argument in \cite{Ueda}, which was rather technical. 
Here, we establish a direction that is somewhat contrary to the one discussed in \cite{KH22}, 
with the merit of being able to extend \cite{KH22}*{Theorem 1.1} to general compact \textit{complex} manifolds.

\subsection*{Acknowledgements}
The authors would like to thank the organizers 
of the 7th workshop \lq \lq Complex Geometry and Lie Groups," 
which served as an impetus for this collaboration. 

The first named author thanks Osamu Fujino, Hisashi Kasuya, and Shinnosuke Okawa for helpful discussions, and is supported by Grant-in-Aid for Scientific Research (C) $\sharp$23K03120 and Grant-in-Aid for Scientific Research (B) $\sharp$24K00524.
The second named author is supported by Grant-in-Aid for Scientific Research (C) $\sharp$23K03119.
The third named author is supported 
by  Grant-in-Aid for Scientific Research (B) $\sharp$21H00976 from JSPS.

\section{Proof of Theorem \ref{thm:main_2}}
This section is devoted to the proof of Theorem \ref{thm:main_2}. 
In what follows, we adhere to the notation used in Theorem \ref{thm:main_2} and \cite{KH22}.

%Let $\{U_j\}$ be an open covering of $X$ such that $\#\{U_j\}<\infty$, each $U_j$ is Stein, and $\{U_j\}$ trivialises any $F\in\mathcal{P}(X)$, which exists. 

\begin{proof}[Proof of Theorem \ref{thm:main_2}]

The strategy of the proof is to chase the Dolbeault isomorphism between $\dba$-cohomology and \v{C}ech cohomology, 
using the H\"ormander's $L^2$-method, which allows us to apply Ueda's lemma for the $\delta$-equation. 

We work with a fixed Hermitian metric $g$ on $X$, with all norms and volume forms defined with respect to $g$.
We now recall a foundational result by H\"ormander 
(see e.g.~\cite{Demailly_book}*{Chapter VIII, Theorem 6.9}, \cite{Dem82}*{5, 4.1 Th\'eor\`eme}, 
or \cite{Hormbook}*{Theorem 4.4.2}), which guarantees the following: 
Let $U^{*}_x$ be a Stein open neighborhood of a given point $x \in X$,  
and let $h=e^{-\varphi}$ be a singular Hermitian  metric 
on the trivial line bundle with $\varphi$ psh. 
Then, for any smooth $(0, 1)$-form $v_x$ on $U^*_x$ such that 
$\bar{\partial} v_x=0$ and $\Vert v_x \Vert_{L^2 (U^{*}_x), h}<\infty$, 
there exists a smooth function $u_x$ on $U^{*}_x$ such that 
\begin{equation*}
	\bar{\partial} u_x = v_x \text { on } U_x^*
\end{equation*}
 with the $L^2$ estimate
\begin{equation*}
	\Vert u_x \Vert_{L^2(U^*_x), h} \le C_{x,g} \Vert v_x \Vert_{L^2 (U^*_x), h},
\end{equation*}
where the constant $C_{x,g} >0$ depends only on $U^*_x$ and $g$. 
Note that the solution of $u_x$ can be chosen to be smooth if $h=e^{-\varphi}$ and $v_x$ are smooth.

We may assume, without loss of generality and by shrinking each $U^*_x$, that $U^*_x$ trivializes 
any flat line bundle on $X$ (see \cite{KH22}*{Lemma 2.7}). 
For each $x \in X$, we pick a relatively compact open subset $U^{**}_x \Subset U^*_x$. 
We then cover $X$ with the open sets $\{ U^{**}_x \}_{x \in X}$, and by the compactness of $X$, 
we may select a finite subcover $\{ U^{**}_{x_j} \}_{j \in I}$. 
By setting $U'_j := U^{**}_{x_j}$ and $U_j := U^{*}_{x_j}$ for each $j \in I$, 
we obtain two finite covers $\{ U'_j \}_{j \in I}$ and $\{ U_j \}_{j \in I}$ of $X$. 
By construction, we can observe that $U_j$ are Stein, 
$U'_j \Subset U_j$, and $U_j$ (and thus $U'_j$ as well) trivializes any flat line bundle on $X$.

Let $F$ be a flat line bundle endowed with a flat Hermitian  metric $h$, 
which is unique up to a multiplicative constant (see e.g.~\cite{KH22}*{Lemma 2.4}). 
Let $v$ be a smooth $\bar{\partial}$-closed $(0, 1)$-form with values in $F$, whose Dolbeault cohomology class $[v] \in H^{0, 1}(X, F)$ is trivial. 
In what follows, all norms are with respect to the fixed flat Hermitian  metric $h$ on $F$ and the fixed Hermitian  metric $g$ on $X$. 
By applying H\"ormander's estimates and trivializing $F$ equipped with the metric $h$, 
we find that for each $j \in I$ there exists a smooth function $u_j$ on $U_j$ such that
\begin{equation}\label{eqlchmdesfl}
	\bar{\partial} u_j = v |_{U_j} \text{ and } 
	\Vert u_j \Vert_{L^2(U_j),h} \le C^{(1)}_{j,g} \Vert v \Vert_{L^2 (U_j),h}
\end{equation}
for a constant $C^{(1)}_{j,g} > 0$, which depends only on $U_j$ and $g$. 
The key point of this estimate is that it holds uniformly for all flat line bundles $(F,h)$ 
and for all $v$ representing a trivial Dolbeault class in $H^{0, 1}(X, F)$.

We then find that $\{ u_i - u_j \}_{i,j \in I}$ defines a holomorphic section of $F$ on $U_{ij} := U_i \cap U_j$. The isomorphism between \v{C}ech cohomology and Dolbeault cohomology implies that $\{ u_i - u_j \}_{i,j\in I}$ represents the \v{C}ech cohomology class corresponding to $[v]$. Since $[v]$ is trivial, we find that $\{ u_i - u_j \}_{i,j \in I}$ is a \v{C}ech $1$-coboundary (after replacing $U^{*}_x$ with a smaller polydisk if necessary), meaning that there exist local holomorphic sections $\{ f_j \}_{j \in I}$ of $F$ such that 
$$f_i - f_j = u_i - u_j$$ on $U_{ij}$ for all $i, j \in I$. We take a partition of unity $\{ \rho_j \}_{j \in I}$ subordinate to $\{ U_j \}_{j \in I}$. The equation $f_i - f_j = u_i - u_j$ on $U_{ij}$ for all $i, j \in I$ then implies
\begin{equation*}
	\sum_{j \in I} \rho_j (u_i - u_j) = \sum_{j \in I} \rho_j (f_i - f_j)
\end{equation*}
on $U_i$. Taking the $\bar{\partial}$-operator of both sides, we have
\begin{equation*}
	\bar{\partial} u_i - \bar{\partial} \left( \sum_{j \in I} \rho_j u_j \right) = -\bar{\partial} \left( \sum_{j \in I} \rho_j f_j \right)
\end{equation*}
on $U_i$, as $\sum_{j \in I} \rho_j \equiv 1$. Since $\bar{\partial} u_i = v |_{U_i}$, we get
\begin{equation*}
	v |_{U_i} = \bar{\partial} \left( \sum_{j \in I} \rho_j (u_j - f_j) |_{U_i} \right).
\end{equation*}
Noting that $\sum_{j \in I} \rho_j (u_j - f_j)$ is defined globally on $X$, we have
\begin{equation*}
	v = \bar{\partial} \left( \sum_{j \in I} \rho_j (u_j - f_j) \right),
\end{equation*}
and hence $u := \sum_{j \in I} \rho_j (u_j - f_j)$ gives the solution to the equation $\bar{\partial} u = v$, which is necessarily unique since $F$ is flat and nontrivial (a well-known result, see e.g.~\cite{KH22}*{Lemma 2.3} for the proof).

\smallskip

The argument above (except for the uniqueness of solutions)
is valid for any $(0,q)$-forms $v$ with values in $F\otimes \mathcal{I}(h)$, 
where $h$ is a singular Hermitian  metric on $F$ with positive curvature current 
(see \cite{Mat}*{Subsection 5.3} for details).
In the following discussion, we essentially use the flatness property.

We apply Ueda's lemma (Theorem \ref{cor:main}) to the \v{C}ech $0$-cochain $\mathfrak{f}:=\{(U'_j, f_{j} |_{U'_j})\}$ corresponding to the smaller cover $\mathcal{U}':= \{ U'_j \}_{j \in I}$, to obtain
\begin{equation} \label{equdfjfjk}
	\max_{j \in I} \sup_{U'_j}|f_j|_h \leq \frac{K_{\mathcal{U}'}}{\mathsf{d}(\mathbb{I}_X, F)} \max_{j, k \in I}\sup_{U'_{jk}}|f_{jk}|_h
\end{equation}
uniformly for all $F \in \mathcal{P} (X) \setminus \{ \mathbb{I}_X \}$, for some constant $K_{\mathcal{U}'} > 0$. Note that we have
\begin{equation} \label{eqfjl2lif}
	\Vert f_j \Vert_{L^2(U'_j)} \leq \mathrm{Vol}(U'_j)^{1/2} \max_{j \in I} \sup_{U'_j}|f_j|_h
\end{equation}
for each $j \in I$, and
\begin{equation*}
	\max_{j, k \in I}\sup_{U'_{jk}}|f_{jk}|_h = \max_{j, k \in I}\sup_{U'_{jk}}|f_j - f_k|_h = \max_{j, k \in I}\sup_{U'_{jk}}|u_j - u_k|_h
\end{equation*}
since $u_j - u_k = f_j - f_k$. 
Consider a weight $\varphi_j$ of $h$ on $U_j$ such that $h = e^{-\varphi_j}$. 
Since $\varphi_j$ is pluriharmonic, there exists a holomorphic function $w_j$ on $U_j$ 
such that $\varphi_j$ is the real part of $w_j$, which implies that $h = e^{-\varphi_j} = |e^{-w_j}|$. 
Given that $(u_j - u_k)e^{-w_j}$ is holomorphic on $U_{jk}$ (which contains $U'_{jk}$),
we have
\begin{equation*}
	\sup_{U'_{jk}}|u_j - u_k|_h \leq C^{(2)}_{j,g} \Vert u_j - u_k \Vert_{L^2(U_{jk}),h} \leq 2 C^{(2)}_{j,g} \Vert u_j \Vert_{L^2(U_{jk}),h}
\end{equation*}
for some constant $C^{(2)}_{j,g} > 0$, depending only on $U'_j$, $U_j$, and $g$, 
by the mean value inequality and the Cauchy--Schwarz inequality. 
Hence
\begin{align}
	\max_{j, k \in I}\sup_{U'_{jk}}|f_{jk}|_h = \max_{j, k \in I}\sup_{U'_{jk}}|u_j - u_k|_h &\le 2 \max_{j, k \in I} C^{(2)}_{j,g}\Vert u_j \Vert_{L^2(U_{jk}),h} \notag \\
	&\le 2 \max_{j \in I} C^{(2)}_{j,g} \Vert u_j \Vert_{L^2(U_{j}),h} \notag \\
	&\le 2 \max_{j \in I} C^{(2)}_{j,g} C^{(1)}_{j,g} \Vert v \Vert_{L^2(U_{j}),h} \le C^{(3)}_{g} \Vert v \Vert_{L^2(X),h} \label{eqfjklifvl2}
\end{align}
by the H\"ormander estimate (\ref{eqlchmdesfl}) on $U_j$, where we set $C^{(3)}_{g} := 2 \max_j C^{(2)}_{j,g} C^{(1)}_{j,g} >0$. With (\ref{equdfjfjk}) and (\ref{eqfjl2lif}), the estimate (\ref{eqfjklifvl2}) gives
\begin{equation} \label{eql2fj}
	\Vert f_j \Vert_{L^2(U_j),h} \le \sum_{k \in I} \Vert f_k \Vert_{L^2(U'_k),h} \le  \frac{ K_{\mathcal{U}'} C^{(3)}_{g}}{\mathsf{d}(\mathbb{I}_X, F)} \left( \sum_{k \in I} \mathrm{Vol}(U'_k)^{1/2} \right) \Vert v \Vert_{L^2(X),h}
\end{equation}
for any $j \in I$.

We now recall $u= \sum_{j \in I} \rho_j (u_j - f_j)$ and evaluate
\begin{equation*}
	\Vert u \Vert_{L^2(X),h} \le C^{(4)}_{g} \sum_{j \in I} \left(  \Vert u_j \Vert_{L^2(U_j),h} + \Vert f_j \Vert_{L^2(U_j),h} \right) \le C^{(4)}_{g} \sum_{j \in I} \left(  C^{(1)}_{j,g} \Vert v \Vert_{L^2(X),h} + \Vert f_j \Vert_{L^2(U_j),h} \right) 
\end{equation*}
again by the H\"ormander estimate (\ref{eqlchmdesfl}) on $U_j$, where $C^{(4)}_g >0$ is a constant which depends only on $g$. Combined with (\ref{eql2fj}), we thus get
\begin{equation*}
	\Vert u \Vert_{L^2(X),h} \le \frac{C_g^{(4)}}{\mathsf{d}(\mathbb{I}_X, F)} \left( \sum_{j \in I} \left( C^{(1)}_{j,g} \mathsf{d}(\mathbb{I}_X, F) + K_{\mathcal{U}'} C^{(3)}_{g} \sum_{k \in I} \mathrm{Vol}(U'_k)^{1/2} \right) \right) \Vert v \Vert_{L^2(X),h}.
\end{equation*}
Noting that $\sup_{F \in \mathcal{P}(X)} \mathsf{d}(\mathbb{I}_X, F)$ is finite and $I$ is a finite set, we get the required estimate.
\end{proof}

%%%%%%%%%%%% References %%%%%%%%%%%%%
\bibliography{ueda.bib}
%\printbibliography

%%%%%%%%%%%%%%%%%%%%%%%%%%%%%%%%%%%%

\end{document}